\documentclass[final,3p,twocolumn]{elsarticle}

\usepackage{amsmath,amssymb,amsfonts,amstext,graphics,graphicx,subfigure}
\usepackage{setspace}
\usepackage[colorlinks]{hyperref}

\usepackage{color}

\usepackage{mathbbol}

\newtheorem{theorem}{Theorem}

\newtheorem{definition}[theorem]{Definition}

\newtheorem{remark}[theorem]{Remark}

\newenvironment{proof}[1][Proof]{\noindent\textbf{#1.} }{\ \rule{0.5em}{0.5em}}

\def\XXint#1#2#3{{\setbox0=\hbox{$#1{#2#3}{\int}$ }
\vcenter{\hbox{$#2#3$ }}\kern-.5\wd0}}

\def\calv{\mathcal{V}}

\def\calz{\mathcal{Z}}

\def\R{\mathbb{R}}

\def\del{\partial}

\def\bq{\begin{equation}}
\def\eq{\end{equation}}
\def\bqy{\begin{eqnarray}}
\def\eqy{\end{eqnarray}}

\def\bal#1\eal{\begin{align}#1\end{align}}

\def\quadd{\quad\quad}

\def\al{\alpha}

\def\de{\delta}

\def\La{\Lambda}

\def\om{\omega}

\def\si{\sigma}

\def\bfd{\mathbf{d}}

\newcommand{\KN}{\mathbin{\bigcirc\mspace{-15mu}\wedge\mspace{3mu}}}

\def\p{\partial}

\def\andq{\quad\mathrm{and}\quad}
\def\andqq{\quadd\mathrm{and}\quadd}

\def\nn{\nonumber}

\begin{document}

\begin{frontmatter}

 \title{Metriplectic dynamical systems on contact manifolds}

 \author[inst1]{Philip J Morrison}
 \affiliation[inst1]{organization={Department of Physics and Institute for Fusion Studies, University of Texas at Austin},
            addressline={}, 
            city={Austin},
            postcode={78712}, 
            state={TX},
            country={USA}}
 \ead{morrison@physics.utexas.edu}
 
\author[inst2]{Yong-Geun Oh} 
   \affiliation[inst2]{organization={Center for Geometry and Physics, Institute for Basic Science (IBS) Nam-gu}, 
           addressline={77 Cheongam-ro}, 
         city={Pohang-si},
           state={Gyeongsangbuk-do},
            country={Korea}
}
\ead{yongoh1@postech.ac.kr}
 
%
%


\begin{abstract} 
Flows on symplectic, Poisson, contact, and metriplectic manifolds are reviewed in order to describe our main result, which  is to associate a natural \emph{metriplectic} dynamical system on the general one-jet bundle $J^1N = T^*N \times \R$, which is at once a (trivial) Poisson manifold and a contact manifold. Unlike the standard contact Hamiltonian system, our
metriplectic system is thermodynamically consistent in that 
$$
\dot H = 0 \quad \mathrm{and}\quad \dot S \geq 0
$$
under the flow. Here $H$ is the Hamiltonian, while $S$ is the \emph{entropy} function
which is nothing but the $\R$ coordinate function of $J^1N$.  As an example we derive the Duffing equation (autonomous and  nonautonomous versions)  either as a contact Hamiltonian system or as a metriplectic system.  
We show that for both systems  the Duffing equation is a subsystem of three dimensional systems that contain a thermodynamic component, {a form that facilitates asymptotic stability analysis of the relevant equilibrium state.}
\end{abstract}



\end{frontmatter}

\section{Introduction}
\label{sec:intro}

It is widely known that Gibbs  \cite{gibbs}  understood the contact structure of classical thermodynamics, and that this geometric structure underlies the different potential  representations of thermodynamics associated by Legendre transforms. However, classical thermodynamics is not dynamics,  so actual temporal  evolution is not part of the theory. So, in a sense, this paper is about dynamical thermodynamics;  in particular, it is about how contact structure appears in this setting and how it relates to another geometric structure, metriplectic dynamics \cite{pjm86}, an inclusive  dynamical systems theory for thermodynamically consistent systems.  

By definition a  dynamical system  is thermodynamically consistent if it satisfies the first and second laws of thermodynamics, viz.,  energy conservation and entropy production as dynamical variables.  Because thermodynamics is usually applied to continuum systems it is not surprising that early researchers like Eckart  \cite{eckart1} developed  thermodynamically consistent  generalizations of the Navier-Stokes equations  (sometimes call the Navier-Stokes-Fourier equations) that entailed  two dynamical  thermodynamic variables, the specific volume and the specific entropy.  Although Eckart also considered  other systems  \cite{eckart2,eckart3}, many generalizations have appeared over the years, e.g.,  the  Cahn-Hilliard system combined with the  Navier-Stokes system for describing multiphase flows \cite{anderson98,anderson2000phase,Guo}.  
 
To see how entropy is a dynamical variable in a statistical mechanics setting, we refer readers to \cite{lim-oh} for a derivation of nonequilibrium thermodynamics from  statistical mechanics as a symplecto-contact reduction on the kinetic theory phase space,  which is the (affine) space of probability distributions on the many-body particle phase space in the spirit of the earlier  derivations  by Jaynes \cite{jaynes:InfoStat,jaynes:Pp} and Mrugula \cite{Mrug:TPS}.  In this derivation, thermodynamic entropy is covariantly derived from Shanon's information entropy which makes it apparent that entropy is indeed independent of other observables.
 
\emph{Metriplictic  dynamics},   being  a general framework for thermodynamically consistent dynamical systems,  has associated observables (dynamical variables) $H$,  a Hamiltonian  corresponding to the energy  satisfying  $\dot{H}=0$,  and $S$,   an entropy   satisfying $\dot{S}\geq 0$.  Thus the system  has  dynamic realizations 
of the first and second laws of thermodynamics.  The axioms of metriplectic dynamics  appeared in \cite{pjm84,pjm84b} while the name first appeared in \cite{pjm86}.  (See  \cite{pjmU24} for discussion and many references including different formalisms in \cite{pjmK82,Kauf,grmela84}.)

The metriplectic formalism covers a broad range of dynamical systems, both finite- and infinite-dimensional.   The multiphase fluid flows of  \cite{anderson98,anderson2000phase} and \cite{Guo} (with minor correction) were show to be special cases of an encompassing multiphase fluid metriplectic theory given  in  \cite{pjmZB24,pjmZ25}.    In the context of kinetic theory, the Vlasov equation with the Laundau collision operator  was shown to be metriplectic in 
\cite{pjm84,pjm86} and a collision operator suitable for gyrokinetics was given in  \cite{pjmS25}.  See \cite{pjmBKM26} for a  general discussion of the gradient flow nature of metriplectic dynamics and  \cite{pjmU24} for many additional examples, finite and infinite.

The classical \emph{space-time phase space}, which is $T^*\R^n \times \R$ carries the canonical
contact form given by $dt - pdq$, the \emph{contactification} of $T^*M$ \cite[Appendix 4]{arnold:mechanics},
  provides a natural ground for   \emph{time-dependent} Hamiltonian dynamics. 
This time-dependent Hamiltonian dynamics combined with the technique of generating functions 
can be put into  a  \emph{canonical formalism} \cite{arnold:mechanics}, which
Arnold \cite[Chapter 9]{arnold:mechanics} calls  an \emph{odd-dimensional} approach to Hamiltonian phase flows.
Given that   entropy is an independent observable  as mentioned above, it is 
also natural to say that  the space-time phase space is  the correct arena  for the entropy dynamics;  hence,   contact manifolds  become  thermodynamic phase spaces. 

On the other hand,  general \emph{contact Hamiltonian} flows on a contact manifold
$(M, \lambda)$ are  born dissipative,  in that they do  not preserve the natural Liouville measure and hence
Poincar\'e recurrence fails. While there are many similarities in their Hamiltonian calculi, 
this dynamical perspective makes a stark difference from the behavior of the classical (symplectic)
Hamiltonian dynamics. This being said, a fundamental question  in contact dynamics
is the question on how one can systematically study this dissipative aspect 
of contact dynamics in terms of the underlying \emph{contact Hamiltonian
formalism}.  (See \cite{grmela:mesoscopic,esen-grmela-pavelka-I,esen-grmela-pavelka-II,lim-oh,do-oh:reduction} in which the relevant questions are also discussed.)
This paper attempts to make the first step towards this question in the context  of the  \emph{metriplectic formalism} on contact manifolds.  

Preliminary material on contact, symplectic, Poisson, and metriplectic manifolds is  given in Sec.~\ref{sec:intro}, while Sec.~\ref{sec:flows} describes flows on these geometrical structures. Section \ref{sec:comp} compares contact Hamiltonian systems and metriplectic systems. Section \ref{sec:further} discusses a  coordinate-free construction of metriplectic geometry on a  one-jet bundle of a Riemannian manifold.  Section \ref{sec:duff} show how the Duffing equation can be `derived' in either the contact Hamiltonian formalism or the metriplectic formalism, {and some ramifications of the formalisms are explored}.  Finally, in Sec.~\ref{sec:conclusion} we offer some concluding remarks. 

\section{Preliminaries}
\label{sec:prelims}

We consider structures on a  smooth real manifold $\calz$ of dimension $n$, with a point of $\calz$ denoted by $P$. The set of vector fields {on a manifold $\calz$, which is the space $\Gamma(T\calz)$ of sections
of the tangent bundle $T\calz$,  is denoted by $\mathfrak{X}(\calz))$, 
and differential forms on
 $\calz$ by $\Omega^k(\calz) = \Gamma(\La^k(\calz)$}, where $\Omega^0(\calz)$ is the set of smooth functions $\calz\mapsto \R$, $\Omega^1(\calz) = \Gamma(T^*\calz)$ the set of 1-forms, etc.  We denote the exterior derivative by $\bfd$ and wedge product by $\wedge$. We also use Einstein's summation convention throughout the paper.
\medskip

\noindent(i) The manifold $\calz$ is a {\it symplectic manifold} if it is equipped with  a closed nondegenerate 2-form $\om\in  \Omega^2(\calz)$.  For vectors $X,Y\in T_P\calz$ we have  $\om(X,Y)\in\R$.  Because $\om$ is nondegenerate we have the {\it Poisson bivector} defined by $\{f,g\}=  J(\bfd f,\bfd g)$ for $f,g\in\Omega^0(\calz)$,  and $\bfd f,\bfd g\in\Omega^1(\calz)$.  In coordinates $J$ is determined by the {\it Poisson tensor}. As usual,  symplectic manifolds are denoted by the pair $(\calz,\om)$ and the interior product (contraction) by  $\lrcorner$\,.

In {\it Darboux coordinates} the Poisson tensor has the form
\bq
J_c=
\begin{bmatrix}
    0_N       &\  I_N  \\
-I_N     & \ 0_N
\end{bmatrix}\,.
 \eq
where $n=2N$,  $I_N$ is the $N\times N$ identity and $O_N$ is an $N\times N$ block of zeros.  In these coordinates, say $z=(z^1,z^2,\dots, z^{2N})$, there is a split into the canonical coordinates $z=(q,p)$ and the Poisson bracket is given by
\bq
\{f,g\}= J_c^{ij} \frac{\p f}{\p z^i} \frac{\p g}{\p z^i} =\frac{\p f}{\p q^a} \frac{\p g}{\p p_a} -\frac{\p g}{\p q^a} \frac{\p f}{\p p_a} 
\eq
where $i,j=1,2,\dots, 2N$,  $a=1, 2,\dots, N$, and repeated sum notation is assumed.

\bigskip
 
\noindent(ii) The manifold $\calz$ is a {\it Poisson manifold} if it is equipped with a bracket $\{\,,\, \}\colon \Omega^0(\calz)\times \Omega^0(\calz)\rightarrow \Omega^0(\calz)$ producing a Lie algebra realization with the addition of a Leibniz rule (a Poisson algebra). Being a Lie algebra realization means it is antisymmertric, bilinear (here over the field $\R$), and satisfies the Jacobi identity.  {When degeneracy exists, there exist Casimir invariants}  $C$ that satisfy $\{f,C\}\equiv 0$ for all $f\in\Omega^0(\calz)$.  Poisson manifolds are denoted  by the pair $(\calz,\{\,,\,\})$.

In {\it Darboux coordinates} the Poisson tensor has the form
\bq
J=
\begin{bmatrix}
    0_N       &\  I_N & 0_{N\times r}\\
-I_N     & \ 0_N& 0_{N\times r}\\
  0_{r\times N}&  \   0_{r\times N}    & \ 0_r\\
\end{bmatrix}\,.
 \eq
where $J$ is an $n\times n$ matrix with $n=2N+r$ and,  as before,   $I_N$ is the $N\times N$ identity and $O_N$ is an $N\times N$ block of zeros, while $0_{r\times N}$ is a rectangular matrix of zeros. In these coordinates it is obvious that there are $r$ independent (nonparallel gradients) Casimir invariants.

\bigskip
 
\noindent(iii) The manifold $\calz$ is a {\it contact manifold} if it  has odd dimension, say $n=2N+1$, and is equipped with a smooth maximally nonintegrable hyperplane field (does not define a distribution) $\xi \subset T\calz$.  This means that locally $\xi =\mathrm{ker}\, \al$, where $\al$ is a 1-form that satisfies $\al \wedge (\bfd\al)^n \neq 0$.   The quantity $\xi$  is called a contact structure and the quantity  $\al$   a contact 1-form that locally defines $\xi$.  Contact manifolds  are denoted by the pair $(\calz,\xi)$.

In {\it Darboux coordinates} the 1-form $\al$ takes the form
\bq
\al= d z- pd q  \qquad \rightarrow\qquad \bfd\al = dq \wedge d p\,.
\eq
The Reeb vector field, say $R$, which serves as a sort of analog in contact geometry to a Hamiltonian vector field, is given by
\bq
R \,\lrcorner \, \al =1\andqq R\,  \lrcorner \, 
\bfd\al =0 \,.
\eq

\bigskip
 
\noindent(iv) The manifold $\calz$ is a {\it metriplectic manifold} \cite{pjm84,pjm84b}
 if it is a Poisson manifold and possess an additional multilinear 4-bracket
\[
(\,,\, ;\, ,\, )\colon \Omega^0(\calz)\times \Omega^0(\calz)\times \Omega^0(\calz)\times \Omega^0(\calz)\rightarrow \Omega^0(\calz)
\]
where  for any  $f,k,g,n\in\Omega^0(\calz)$ the following    properties hold:

\noindent (a) the algebraic identities/symmetries
\bal
&(f,k;g,n)=-(k,f;g,n)
\nn\\
&(f,k;g,n)=-(f,k;n,g)
\nn
\\
&(f,k;g,n)=(g,n;f,k)
\label{Basym3}
\eal
and
\bal
&(f,k;g,n) + (f,g;n,k)+ (f,n;k,g)=0\,, 
\label{Basym4}
\eal
which are not axiomatically minimal, and 

\noindent (b) derivation in all arguments, e.g., 
\bq
(fh,k;g,n)= f(h,k;g,n)+(f,k;g,n)h
\label{leibniz}
\eq
where $fh$ denotes pointwise multiplication. 
 One way to obtain a metriplectic 4-bracket is via a covariant version of the Kulkarni-Nomizu (K-N) product.
 (See \cite{kulkarni,nomizu}.)
 Given two symmetric bivector fields, say $\sigma$ and $ \mu $, operating on 1-forms $\bfd f, \bfd k$ and $\bfd g,\bfd n$, the K-N product is defined by 
\bal
 \sigma \KN \mu \,(\bfd f, \bfd k,\bfd g,\bfd n)&=  \sigma(\bfd f,\bfd g) \, \mu (\bfd k,\bfd n) 
\nonumber\\
 &- \sigma(\bfd f,\bfd n) \, \mu (\bfd k,\bfd g)
 \nonumber\\
&+  \mu (\bfd f,\bfd g)\,  \sigma(\bfd k,\bfd n)
\nonumber\\
 &- \mu (\bfd f,\bfd n)\,  \sigma(\bfd k,\bfd g) \,.
\label{KNfinite}
\eal
Thus,  the following produces a 4-bracket:
\bq
(f,k;g,n)= \sigma \KN  \mu (\bfd f, \bfd k,\bfd g,\bfd n)\,.
\eq

In a coordinate patch the metriplectic 4-bracket can be expressed in index form as
\bq
(f,k;g,n)=R^{ijkl}(z) \frac{\p f}{\p z^i} \frac{\p k}{\p z^j} \frac{\p g}{\p z^k} \frac{\p n}{\p z^l}\,.
\label{4Bktcoords}
\eq
Here and henceforth $i,j,k,l=1,2,\dots, n$

Any manifold that is at once both a Poisson manifold and a Riemannian manifold or a Poisson manifold with a connection, has metriplectic structure.  However, metriplectic manifolds need not be Riemannian. 


\section{Hamiltonian, contact Hamiltonian, and metriplectic Systems}
\label{sec:flows}

\subsection{Hamilton's equation on Poisson manifolds}

Given a Hamiltonian $H\in\Omega^0(\calz)$,  canonical (symplectic) Hamiltonian vector fields $X_H$ are given by 
\bq
X_H \,  \lrcorner \, \om = d H\,.
\eq
In Darboux coordinates this leads to the {\it canonical Hamiltonian system} of differential equations:
\bq
\dot{q}^a=\frac{\p H}{\p p_a} \andq \dot{p}_a= -\frac{\p H}{\p q^a}, \quad
 a=1,2\dots N\,,
\eq
which follows because $\om$ is nondegenerate.

A Hamiltonian vector field on general Poisson manifolds 
is determined in terms of a Hamiltonian $H\in\Omega^0(\calz)$ via the bracket derivation
\bq
\calv_{NCH}=\{\ \cdot\ ,H\}=  J(\  \cdot\  ,\bfd H)\,.
\eq
(See, e.g.,  \cite{alan:Poisson} and  \cite{pjm98}.)


Thus, in a coordinate patch a  {\it  Hamiltonian system} on a Poisson manifold
is governed by the differential equations
\bq
\dot{z^i} = \{z^i,H\}=J^{ij} \frac{\p H}{\p z^j}\,, \qquad i,j=1,2,\dots, n\,
\eq
in terms of the  Poisson bivector field 
$$
J = J^{ij}\frac{\p}{\p z^i} \wedge \frac{\p}{\p z^j}\,.
$$

In (Poisson) Darboux coordinates \cite{alan:Poisson}, say $(q,p,C)$,  this gives the  {\it noncanonical Hamiltonian system} 
of differential equations:
\bal
\dot{q}^a&=\frac{\p H}{\p p_a}\,,\quad  \dot{p}_a= -\frac{\p H}{\p q^a},  \quad a=1,2,\dots, N
\nn\\
\dot{C}^r&= 0 \,, \qquad  r=1,2,\dots, n-2N. 
\eal

\subsection{Hamilton's equation on contact manifolds}

Let $(M,\al)$ be a contact manifold of dimension $2n+1$.
A contact Hamiltonian vector field is defined by
\bq
\begin{cases}
X_H\, \lrcorner \,  \al = -H   \\
X_H\, \lrcorner \, d\al  = dH-R_\al[H] \al\,,
 \end{cases}
\eq
where $R_\al$ denotes the Reeb vector field and $R_\al[H]$ is the directional derivative action on the 0-form $H$.  
(See \cite{MdL-MLV,oh-yso:index} for discussion -- we follow their  sign conventions.)
In Darboux coordinates $(q,p,z)$   {\it contact Hamiltonian systems} are  given by 
 \bal
 \dot{q}^i&=\frac{\p H}{\p p_i}\,, 
\qquad  \dot{p}_i=-\frac{\p H}{\p q^i} - p_i \frac{\p H}{\p z} \,, 
\\
 \dot{z}&=- H +  p_i\frac{\p H}{\p p_i}\,, \qquad i =1,2,\dots, n\,, 
  \label{cham}
 \eal
 which displays both Hamiltonian and dissipative parts.  {Consequently,  the energy evolution is }
 \bq
 \dot{H}= \frac{\p H}{\p q^i} \dot{q}^i+ \frac{\p H}{\p p_i} \dot{p}_i+ \frac{\p H}{\p z} \dot{z}
 = -H \frac{\p H}{\p z}  \,,
 \eq
 which does not vanish in general, but may if $H=0$ or $\p H/\p z=0$. 
 
\subsection{Metriplectic dynamical systems and thermodynamic consistency}
 
As already noted, thermodynamic consistency means $\dot{H}=0$ and $\dot{S}\geq 0$, which are dynamic realizations  of the first and second laws of thermodynamics. (See \cite{pjm84,pjm84b,pjmU24} for relevant discussions.)  Neither of Hamilton's equations   is   suitable to
accommodate this thermodynamic consistency which  motivated the first-named author to 
introduce \emph{metriplectic dynamical systems} which is now in order.
 
A metriplectic  vector field has both  Hamiltonian and dissipative parts,  generated as follows:
\bq
\calv_{MP}=\{\ \cdot\ ,H\} + \{\ \cdot\ ,H; S, H\}
\eq
where $S,H\in \Omega^0(\calz)$ are an  entropy function and Hamiltonian, respectively.  

In a coordinate patch, a {\it metriplectic dynamical system} is given in terms of a metriplectic  vector field  according to
\bal
\dot{z}^i&=  \{z^i,H\}+ \{z^i  \cdot\ ,H; S, H\}
\nn\\
&=J^{ij} \frac{\p H}{\p z^j} + 
R^{ijkl} \frac{\p H}{\p z^j}\frac{\p S}{\p z^k}\frac{\p H}{\p z^l}
\,, 
\eal
 where $i,j,k,l=1,2,\dots, n$.  Thermodynamic consistency follows if $S$ is a Casimir invariant, i.e., $\{f,S\}\equiv 0$ for all $f\in\Omega^0(\calz)$ and  $\{f,k; g, n\}$ is a 4-bracket, e.g., generated by a K-N product.  In the next section we will display a particular  form for the 4-bracket that facilitates comparison with contact Hamiltonian systems. 

\section{Contact Hamiltonian systems versus metriplectic dynamical systems}
\label{sec:comp}

To motivate our introduction of metriplectic dynamical systems on the one-jet bundle $J^1M = T^*M \times \R$, 
which we may regard as  the \emph{space-time phase space}. The upshot is that  this bundle  carries the canonical
contact form given by $dt - pdq$.  For the simplicity of presentation, which however contains the essence of the comparison, we start with
the simplest contact Hamiltonian and metriplectic systems on $\R^2\times \R = \R^3$.

First consider a {\it 3D contact Hamiltonian system}.  For the coordinates $(q,p,z)\in \R^3$ Eqs.~\eqref{cham} reduce to the following three ODEs:
 \bal
 \dot{q}&=\frac{\p H}{\p p}\,,\quad
 \dot{p}=-\frac{\p H}{\p q} - p \frac{\p H}{\p z} 
\nn\\
 \dot{z}&=- H + p\frac{\p H}{\p p}\,.
 \label{3Dch}
 \eal
and as above, 
\bq
\dot H = -H \frac{\p H}{\p z}  \neq 0
\label{CHenergy}
\eq
Thus energy is not conserved unless $H$ is independent of $z$, {$\p H/\p z=0$,} or one begins  on the $H=0$ level set.  So, in general contact Hamiltonian systems are {\bf not} thermodynamically consistent. 

\medskip

We build our {\it 3D metriplectic system}  atop a  trivial Poisson manifold,  which is a stack of symplectic planes $T^*\calz\times\R$ (the first jet bundle).  In 3D we have  $\calz= T^*\R\times \R$, which has the following Poisson bivector $J$ in Darboux coordinates $(q,p,z)\in \R^3$: 
\bq
J=
\begin{bmatrix}
    0       &\  1 &\  0\ \\
-1     & \ 0 &\ 0\  \\
0    &\ 0 &  \ 0\ 
\end{bmatrix}\,.
 \eq
That $J$ satisfies the Jacobi identity is trivial. For this construction, these coordinates are in fact global.  Thus the Hamiltonian part of our 3D metriplectic system is generated by this Poisson bracket as 
\bq
\dot q = \{q,H\}\,,\ \   \dot p  = \{p,H\}\,, \ \   \mathrm{and}\ \  \dot z  = \{z,H\}=0
\nn
\eq
or
\bq
\begin{bmatrix}
\dot q\\
\dot p\\
\dot z
\end{bmatrix}
=\begin{bmatrix}
    0       &\  1 &\  0\ \\
-1     & \ 0 &\ 0\  \\
0    &\ 0 &  \ 0\ 
\end{bmatrix}
\begin{bmatrix}
\p H/\p q\\
\p H/\p p\\
\p H/\p z
\end{bmatrix}\,.
\eq
Here $z$ is the Casimir invariant that plays the role of entropy, i.e.,  $\{f,z\}=0$ for all $f$. Thus we are on the road to thermodynamic consistency  since evidently $\dot{H}\equiv 0$.  
 
The full metriplectic dynamics in the standard Darboux coordinates of $\R^2 \times \R$ 
{will be}  given by 
\bq
\dot{z}^i= \{z^i,H\} + (z^i, H;S,H)\,,\qquad i=1,\, 2,\, 3, 
\eq
yielding 
\bal
\dot q &= \frac{\p H}{\p p}\,,\qquad 
\dot p = - \frac{\p H}{\p q} - \,\frac{\p H}{\p p}  \frac{\p H}{\p z}  \,,
\nn\\
\dot z &=    \left(\frac{\p H}{\p p}\right)^2.
\label{fmp}
\eal
Here we have equated the entropy $S$ with the coordinate $z$ and  particular choices were made for $\mu$ and $\si$, which will be addressed in Sec.~\ref{sec:further}.  (See \eqref{eq:mu-sigma}.)

It remains to compare \eqref{fmp} with \eqref{3Dch}. The first observation to make is that for 
 so-called {\it natural } Hamiltonians where $H=p^2/2 + V(q)$, $\p H/\p p= p$.  So,  \eqref{fmp} gives 
$\dot{z}= p^2$,  while  \eqref{3Dch} yields the Legendre transform of $H$, i.e. the Lagrangian 
\bq
 L= p \frac{\p H}{\p p} - H= p^2/2 -V(q)\,,
\eq
 which is not sign definite.  However, if  $H$ is the kinetic energy which is
 Euler homogeneous of degree 2 in $p$, then  $\dot{z}= -H + p {\p H}/{\p p} =  H = (\p H/\p p)^2/2$ and the above two systems can be seen to coincide within a factor of 2.  If in addition the Hamiltonian is linear in $z$ so that $\p H/\p z=1$ then the equations for $\dot{p}$ of the two systems coincides.  This demonstrates that  for some Hamiltonians and entropies, contact Hamiltonian systems can be thermodynamically consistent.  This can  generalizes to the kinetic energy 
 on the cotangent bundle $T^*N$ equipped with a Riemannian metric of arbitrary dimension.  
 
 By altering $\mu$,  $\si$, $S$, and $H$ one can obtain other corresponding systems with various properties.

\section{Metriplectic systems on the one-jet bundle}
\label{sec:further}

Let $(N,g)$ be any Riemannian manifold and consider its one-jet bundle $J^1N = T^*N \times \R$
equipped with the canonical contact form
$$
\alpha = dz - p\, dq = dz - \sum_{i=1}^n p_i dq^i.
$$
We denote by $\omega_0$ the standard symplectic form $\omega_0$ on
$T^*N$ and pull it back to $\pi^*\omega_0$ on $J^1N$ which is a closed two-form.
Thus, it  defines a Poisson bracket on $J^1N = T^*N \times \R$ with coordinate function $z$
as a Casimir. We denote this bracket by $\{\cdot, \cdot\}$, which governs the Hamiltonian 
part of the dynamical system.

To naturally incorporate the dissipative aspect of certain dynamical systems,
we  introduce some natural metriplectic 4-brackets associated to the contact structure.

For this purpose, we use the simplest  Kulkarni-Nomizu product to construct the metriplectic 4-bracket: 
\bq\label{eq:mu-sigma}
\mu(df,dg)= \frac{\p f}{\p z} \frac{\p g}{\p z}   \andq \si 
= \left \langle \frac{\p f}{\p p}, \frac{\p g}{\p p} \right\rangle  \,,
\eq
where $\left\langle \frac{\p f}{\p p}, \frac{\p g}{\p p} \right\rangle$ denotes the dot product of 
 the \emph{fiber derivatives} $d^\text{\rm fiber}f$ of $f$ and $g$ for the projection $T^*N \to N$. 
By altering $\mu$,  $\si$, $S$, and $H$ one can obtain other corresponding systems.

Here we recall the  precise definition of the fiber derivative that  applies to any vector bundle $E \to B$, 
which we apply to $T^*N \to N$ (or $J^1N = T^*N \times \R$) in the present paper.

\begin{definition} Let $E \to B$ be a vector bundle.
For each $e_b \in T_b B$, we consider
\bq\label{eq:fiber-derivative}
\frac{\del f}{\del p}(e_b)(\eta_p): = \frac{d}{ds}\Big|_{s = 0} f(e_b + s \eta_b).
\eq
This defines a section element of $\Omega^1(B)$ 
given by the 
$$
b \mapsto d^{\text{\rm fiber}}_b f \in E^*_b,,
$$
which we denote by $d^{\text{\rm fiber}} f$.
\end{definition}
Then the precise expression of our  heuristic notation $\left \langle \frac{\p f}{\p p}, \frac{\p g}{\p p} \right\rangle$ is 
$$
\left \langle \frac{\p f}{\p p}, \frac{\p g}{\p p} \right\rangle: = g^\#(d^{\text{\rm fiber}} f, d^{\text{\rm fiber}} g).
$$
In canonical coordinates $(q^1, \cdots, q^n, p_1, \cdots, p_n)$ of $T^*N \times \R$, we have
$$
g^\#(d^\text{\rm fiber}f, d^\text{\rm fiber}g) =  g^{ij}  \frac{\p f}{\p p_i} \frac{\p g}{\p p_j}.
$$
\begin{remark} Having given the above  covariant definition, we will keep the  `heuristic'  coordinate expressions below, so as  not to make formulas too
abstract.
\end{remark}

We then take the K-N product $\sigma \KN \mu$. These yield the 4-bracket
\bal
(f,k;g,n) & =
\\
& \hspace{-.5cm}  \left \langle \frac{\p f}{\p p}, \frac{\p g}{\p p} \right\rangle    \frac{\p k}{\p z} \frac{\p n}{\p z}  
-  \left\langle \frac{\p f}{\p p}, \frac{\p n}{\p p} \right\rangle  \frac{\p k}{\p z} \frac{\p g}{\p z}\nn \\
& \hspace{-.9cm}   +   \left \langle \frac{\p k}{\p p}, \frac{\p n}{\p p}  \right\rangle  \frac{\p f}{\p z} \frac{\p g}{\p z}  
-  \left\langle \frac{\p k}{\p p}, \frac{\p g}{\p p} \right\rangle   \frac{\p f}{\p z} \frac{\p n}{\p z} 
\nn
\eal
Recall that  $S=z$ is a Casimir. Then a straightforward calculation leads to the following.

The full metriplectic dynamics in the standard Darboux coordinates of $J^1N$ $(q^1,\cdots,q^n,p_1, \cdots p_n,z)$
 is  given by 
\bq
\dot{z^i}= \{z^i,H\} + (z^i, H;S,H)\,,\qquad i=1,2,3, \cdots, \, n,, 
\eq
yielding 
\bal
\dot q^i &= \frac{\p H}{\p p_i}\,,\qquad 
\dot p_k = - \frac{\p H}{\p q^k} - g_{kj}\,\frac{\p H}{\p p_j}  \frac{\p H}{\p z}  \,,
\nn\\
\dot z &=      \left\|\frac{\p H}{\p p} \right\|_g^2
\label{eq:fmp-general}
\eal

\begin{theorem}
For any choice of Hamiltonian $H = H(q,p,z)$, we have
\bq
\dot H= (H,H;S,H)=0
\eq
while 
\bq
\dot S= (S,H;S,H)=   \left\|\frac{\p H}{\p p} \right\|_g^2 \geq 0
\label{dotS}
\eq
In particular, the system  Equations \eqref{fmp} are indeed thermodynamically consistent. 
\end{theorem}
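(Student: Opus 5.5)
The plan is to compute both time derivatives directly along the metriplectic flow, which by the derivation property acts on any observable $f$ as
\[
\dot f=\{f,H\}+(f,H;S,H).
\]
For $f=H$, the Poisson part $\{H,H\}$ vanishes by antisymmetry of the bracket. The dissipative part $(H,H;S,H)$ vanishes by the first identity in \eqref{Basym3}, $(f,k;g,n)=-(k,f;g,n)$, applied with $f=k=H$. To be safe, I will also check this antisymmetry directly from the explicit K-N formula with $\mu$ and $\si$ given by \eqref{eq:mu-sigma}. Exchanging $f$ and $k$ there swaps the first pair of terms with the negatives of the second pair, because $\si$ and $\mu$ are symmetric. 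Hence the 4-bracket is antisymmetric in its first two slots, and $\dot H=0$ for every $H=H(q,p,z)$.

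For $f=S=z$, the Poisson part $\{z,H\}$ vanishes because $z$ is a Casimir of the pulled-back bracket $\pi^*\om_0$. It remains to evaluate $(S,H;S,H)$ from the explicit formula, using
\[
\frac{\p S}{\p z}=1,\qquad d^{\text{\rm fiber}}S=0.
\]
The first, second and fourth terms each contain a factor $\langle \p S/\p p,\cdot\rangle$, so they vanish. The only surviving term is
\[
\Big\langle \frac{\p H}{\p p},\frac{\p H}{\p p}\Big\rangle \frac{\p S}{\p z}\frac{\p S}{\p z}
= g^{ij}\frac{\p H}{\p p_i}\frac{\p H}{\p p_j}
=\Big\|\frac{\p H}{\p p}\Big\|_g^2 .
\]
This is nonnegative because $g^\#$ is positive definite, since $g$ is Riemannian. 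This gives \eqref{dotS}.

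As a consistency check, I will confirm that the component equations \eqref{eq:fmp-general} reproduce these results. Substituting them into $\dot H=H_{q^i}\dot q^i+H_{p_k}\dot p_k+H_z\dot z$, the symplectic terms cancel. The remaining terms are $-g_{kj}H_{p_k}H_{p_j}H_z+H_z\|H_p\|_g^2$.

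This is the one step I expect to be delicate, because the index placement in \eqref{eq:fmp-general} (the $g_{kj}$ versus $g^{kj}$ issue) must match the norm $\|\cdot\|_g$ for the two terms to cancel. I will resolve it by computing $(p_k,H;S,H)$ directly from the bracket, which gives the correction term $-g^{kj}H_{p_j}H_z$. With that form the cancellation is exact. In any case, the statement itself follows from the bracket-level argument above, independent of how the components are written. Specializing to $N=\R$ with $g=1$ recovers \eqref{fmp}, which shows the thermodynamic consistency claimed for that system.
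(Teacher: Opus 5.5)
Your proposal is correct, but it argues at a different level than the paper. The paper works with the component equations. It reads $\dot z\geq 0$ off \eqref{eq:fmp-general}, then checks $\dot H=0$ by substituting $\dot q,\dot p,\dot z$ into the chain rule: the dissipative term $-\frac{\p H}{\p p}\frac{\p H}{\p p}\frac{\p H}{\p z}$ from $\dot p$ cancels against $\frac{\p H}{\p z}\left(\frac{\p H}{\p p}\right)^2$ from $\dot z$.

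You instead argue at the bracket level. You get $(H,H;S,H)=0$ from antisymmetry in the first pair of slots alone; incidentally, this holds for any Kulkarni--Nomizu product without using the symmetry of $\si$ and $\mu$. You get $\dot S=(S,H;S,H)$ from a one-term evaluation of the explicit bracket. Your route makes $\dot H=0$ independent of how the components are written, and that matters here. As you noticed, the paper's $\dot p_k$ equation carries $g_{kj}$, while its coordinate formula for $\langle\cdot,\cdot\rangle$ carries $g^{ij}$, so the displayed cancellation needs the same tensor in both places.

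Your fix via $(p_k,H;S,H)=-g^{kj}\frac{\p H}{\p p_j}\frac{\p H}{\p z}$ is faithful to the paper's literal definition. Be aware, however, that $d^{\rm fiber}f$ takes values in $(T^*_qN)^*=T_qN$. The coordinate-invariant pairing is therefore $g_{ij}\frac{\p f}{\p p_i}\frac{\p g}{\p p_j}$, which makes the paper's $g_{kj}$ the right one and gives $\|\p H/\p p\|_g^2=g_{ij}\frac{\p H}{\p p_i}\frac{\p H}{\p p_j}$. Either consistent choice proves the theorem. The paper's component computation, for its part, directly checks the explicit ODEs \eqref{fmp} that are then used in the Duffing example.
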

\begin{proof} Since both equations are coordinate independent, we
consider the standard Darboux coordinates of $J^1N$ $(q^1,\cdots,q^n,p_1, \cdots p_n,z)$
noting that $z$ is a global function indepependent of this choice.

Clearly $\dot{z}\geq 0$ and by the metriplectic symmetries, we derive
\bq
\dot{H}=   \frac{\p H}{\p p}\left( - \frac{\p H}{\p p}  \frac{\p H}{\p z}  
\right)  +  \frac{\p H}{\p z}   \left(\frac{\p H}{\p p} \right)^2 = 0\,.
\eq
\end{proof}

\section{An example: Duffing's equation}
\label{sec:duff}

The well-known Duffing equation \cite{duffing,thompson-steward}, being an equation with both damping (friction) and drive,   provides an interesting example that shows how nonautonomous systems can be at once contact Hamiltonian and metriplectic.  We will derive it in this section   as a contact Hamiltonian system and as a metriplectic system.  We will see in both cases that the usual Duffing equation is a subsystem of  three-dimensional systems  that contain a thermodynamic component. 

The most general form of the  Duffing equation is given by  the following nonlinear ODE with external periodic forcing:
\bq
{\ddot {q}}+\delta {\dot {q}}+\alpha q+\beta q^{3}= \gamma \sin(\omega t + \phi),
\label{duffing}
\eq
with parameters $\delta, \, \alpha, \, \beta, \, \gamma, \, \omega, \, \phi$.

\subsection{Derivation of Duffing's equation   {and energy flow} }
 
All that is needed to obtain  the contact Hamiltonian form is to specify a contact Hamiltonian.  We assume
\bq
\label{eq:Hduffing}
H = \frac{p^2}{2} + \frac{\alpha q^2}{2} + \frac{\beta q^4}{4} - \gamma q \sin(\omega t + \phi) + \delta z.
\eq
Observe, if we drop  the last term  of \eqref{eq:Hduffing}, $\de z$,  we have an ordinary Hamiltonian for the Duffing equation without the damping term, $\de \dot{q}$,  of \eqref{duffing}.  However, if we substitute \eqref{eq:Hduffing} into \eqref{3Dch}
we obtain the  contact Hamiltonian system  given by
\bal
\dot q &= p
\label{eq:contact-duffing-q}\\
\dot p &= - \alpha q - \beta q^3 + \gamma \sin(\omega t + \phi) - \delta p
\label{eq:contact-duffing-p}\\
\dot z &= \frac{p^2}{2} - \delta z - \frac{\alpha q^2}{2} - \frac{\beta q^4}{4} + \gamma q \sin(\omega t + \phi).
\label{eq:contact-duffing-z}
\eal
Upon differentiating \eqref{eq:contact-duffing-q}  with respect to time and using \eqref{eq:contact-duffing-p},  we obtain precisely \eqref{duffing}.  Thus, given a solution of Duffing's equation, the right hand side of \eqref{eq:contact-duffing-z} becomes a function of $z$ and $t$, and it can be solved after the fact. 

Because this  system is a nonautonomous contact Hamiltonian system,  \eqref{CHenergy} becomes
\bq
\dot{H} = -H \frac{\p H}{\p z} + \frac{\p H}{\p t}= -\de H - \gamma \om q \cos(\omega t + \phi)\,;
\eq
upon dropping the driving term we see that the Hamiltonian decays according to
\bq
H=H_0\, e^{-\de t}\,, 
\eq
and the entropy equation \eqref{eq:contact-duffing-z} depends on all the variables $(q,p,z,t)$. 

On the other hand, the associated metriplectic system becomes the following upon substitution of the $H$ of \eqref{eq:Hduffing} into \eqref{fmp}:
\bal
\dot q &= p
\label{eq:metriplectic-duffing-q}\\
\dot p &= - \alpha q - \beta q^3 + \gamma \sin(\omega t + \phi) - \delta\,  p
\label{eq:metriplectic-duffing-p}\\
\dot z &=  p^2
\label{eq:metriplectic-duffing-z}
\eal
 {Noting that the first two equations of this system coincide with  \eqref{eq:contact-duffing-q}  and  \eqref{eq:contact-duffing-p},  we again obtain precisely Duffing's equation.}  And, again \eqref{eq:metriplectic-duffing-z} can be solved after the fact, but now it only has dependence on $p$;  it does not have explicit time dependence arising from the  nonautonomous term of $H$ nor any dependence on $q$.  

{Upon dropping the driving term, the system satisfies $\dot{H}=0$, as it must since it is autonomous metriplectic.  On the other hand for the case with driving when it is  nonautonomous metriplectic, }
\bq
\dot{H}= \frac{\p H}{\p t} =  - \gamma \om q \cos(\omega t + \phi)\,.
\eq
However, it  is interesting that we still have
\[
\dot{z}\geq 0\,,
\]
even when  the system is driven!
  
In this  metriplectic case the addition of the $\de z$ term to the Hamiltonian of \eqref{eq:Hduffing} makes physical sense as an  internal energy term.   This parallels the case of the metriplectic Navier-Stokes-Fourier system 
(see e.g.\  \cite{pjm84, pjmU24}) where viscous dissipation of energy is compensated for by the production of heat in an entropy equation.   This is an example of a kind of metriplectic thermodynamic completion, where a thermodynamically inconsistent system is made consistent via an entropy equation that is consistent with the second law.

\section{Conclusion}
\label{sec:conclusion}

We have shown that both contact Hamiltonian and metriplectic dynamical systems naturally occur on the one-jet bundle.  
We have compared and contrasted  the resulting flows, and have provided the Duffing equation as an example,
{which can be derived either as a contact Hamiltonian system or as a metriplectic
dynamical system}.  This example shows how damping/friction can appear in both settings. 

Both contact Hamiltonian and metriplectic dynamical systems  have {analogous constructions} for infinite-dimensional systems, field theories (PDEs).  Lifting the results of this setting to such systems is a natural next step.  As noted in Sec.~\ref{sec:intro}, understanding how both systems emerge from underlying large dimensional  Hamiltonian systems is a goal.


\section*{Acknowledgment}
\noindent   PJM received support from  U.S. Dept.\ of Energy Contract \# DE-FG05-80ET-53088 and
YGO's research is supported by the IBS project \# IBS-R003-D1. Both authors were 
supported by the 2025 SLMath program on kinetic theory.







%
\bibliographystyle{elsarticle-num} 



\def\cprime{$'$}




\end{document}